\DeclareMathOperator{\fs}{FS}
\newtheorem{claim}{Claim}
\theoremstyle{definition}
\author[D. Fern\'andez]{David J. Fern\'andez-Bret\'on}
\address{
Escuela Superior de F\'{\i}sica y Matem\'aticas\\
Instituto Polit\'ecnico Nacional\\
Av. Instituto Polit\'ecnico Nacional s/n Edificio 9, 
Col. San Pedro Zacatenco, Alcald\'{\i}a Gustavo A. Madero, 07738, CDMX, Mexico. 
}
\email{dfernandezb@ipn.mx}
\urladdr{https://dfernandezb.web.app}
\title[Folkman's theorem and the primes]{Folkman's theorem and the primes}
\subjclass[2020]{Primary 11A41, 05D10; Secondary 11B75.}
\keywords{Ramsey-type theorem, Folkman's theorem, Hindman's theorem, prime numbers, Euclid's theorem.}
\begin{document}

\begin{abstract}
We provide two new proofs of the infinitude of prime numbers, using the additive Ramsey-theoretic result known as Folkman's theorem (alternatively, one can think of these proofs as using Hindman's theorem). This adds to the existing literature deriving the infinitude of primes from Ramsey-type theorems.
\end{abstract}

\maketitle

\section{Introduction}

A very recent and exciting vein of research is the search for new proofs in number theory utilizing Ramsey-theoretic results. Specifically regarding Euclid's proof that there are infinitely many primes, the first two instances were Alpoge's proof~\cite{alpoge} using van der Waerden's theorem~\cite{vanderwaerden}, as well as Granville's proof~\cite{granville} using the same fact together with Fermat's theorem on sums of squares within arithmetic progressions. More recently, Gasarch~\cite{gasarch} provided yet another proof using Schur's theorem~\cite{schur} together with Fermat's Last Theorem (for definiteness, one could say that Gasarch's proof uses the case $n=3$ of Fermat's Last Theorem). The same proof was discovered independently by Elsholtz~\cite{elsholtz}, who goes on to provide other proofs using results such as Roth's theorem (the case of length three of Szeméredi's theorem), or even Hindman's finite sums theorem~\cite{hindman-thm} together with an observation about the distances between consecutive $n$-th powers. More recent work along the same lines include G\"oral, Özcan and Sertba\c{s}' beautiful proof~\cite{goral-et-al} utilizing an extension of the polynomial van der Waerden's theorem of Bergelson and Leibman~\cite{bergelson-leibman-setpoly}, as well as Ad{\i}belli and G\"oral's proof~\cite{adibelli-goral} using Rado's theorem.

In this short paper, we insert ourselves within this tradition by using yet another Ramsey-theoretic result, Folkman's theorem, to prove that there are infinitely many primes. This result was named after Folkman by Graham, Rothschild and Spencer~\cite[\S 3.4]{graham-rothschild-spencer}, following a personal communication from Folkman to Graham and Rothschild; this result, however, was actually first published by Sanders~\cite{sanders}, and is also sometimes known as the Folkman--Rado--Sanders theorem because it follows from Rado's more general theorem~\cite[\S 3.3]{graham-rothschild-spencer} on partition regular equations. Folkman's theorem establishes that, for every colouring $c$ of the set of natural numbers $\mathbb N$ with finitely many colours, and for every $M\in\mathbb N$, there exists a set of (positive) natural numbers $X\subseteq\mathbb N$, with $|X|=M$, such that all sums obtainable from finitely many elements of $X$ (without repetitions) have the same colour, in other words, the set
\begin{equation*}
\fs(X)=\left\{a_1+\cdots+a_k\bigg|1\leq k\leq M\text{ and }a_1,\ldots,a_k\in X\text{ are distinct}\right\}
\end{equation*}
is $c$-monochromatic. So, Folkman's theorem is a generalization of Schur's theorem (Schur's being Folkman's particular case with $M=2$); alternatively, one can think of it as the finite version of Hindman's theorem (in Hindman's theorem, the corresponding set $X$ is infinite). After using Schur's and then Roth's theorem to prove the infinitude of prime numbers, Elsholtz~\cite[p. 254]{elsholtz} conjectured that it should also be possible to use Folkman's theorem (combined with some other number-theoretic results similar in flavour to Fermat's Last Theorem); immediately after he proceeds to provide another proof using Hindman's theorem (along with an observation about the differences between consecutive $n$-th powers).

This paper provides two different proofs using Folkman's theorem, so in a sense we confirm Elsholtz's conjecture; on the other hand, our proofs do not use any even moderately complicated number-theoretic result, but only combinatorics (the most advanced tool we use is the pigeonhole principle for the second of our proofs). One could say that our proofs sacrifice the simplicity of Schur's theorem, requiring to use the stronger Folkman's theorem, in exchange for being able to avoid any deep number theory. Of course, Folkman's theorem itself can be proved without using the fact that there are infinitely many primes. For example, one can use any of the original proofs (either Sanders'~\cite{sanders} or the one in~\cite[\S 3.4]{graham-rothschild-spencer}), or one can prove Hindman's theorem~\cite{hindman-thm} first and then deduce Folkman's using a compactness argument. Hindman's theorem itself can be proved either purely elementarily~\cite{baumgartner-short-proof-of-hindman}, or by means of ultrafilters~\cite[Corollary 5.10]{hindman-strauss} (even without compactness, it is possible to directly prove Folkman's theorem with ultrafilters as in~\cite[Theorem 20]{dfernandezb-monthly}); yet another alternative is to decide that rather than Folkman's theorem, we will use Hindman's theorem---even with this interpretation, our proofs are different from Elsholtz's from~\cite[Theorem 3]{elsholtz}. It is worth mentioning that, if one chooses to go the ultrafilter route, the proof of Hindman's theorem (or the proof of Folkman's theorem, for that matter) is of significantly less complexity than that of van der Waerden's theorem (cf. the proofs in e.g.~\cite[\S 14.1]{hindman-strauss} and~\cite[pp. 129-130]{dfernandezb-monthly}, or even the newest proof from~\cite{dinasso-new-vanderwaerden}) and so, from that perspective, our proofs are simpler than Alpoge's and Granville's (although definitely more complicated than Euclid's classical proof).

\section{The proofs}

We begin by establishing our notation and terminology. Given integers $a$ and $m>1$, the notation $a\mod m$ will stand for the unique number between $0$ and $m-1$ that is congruent to $a$ modulo $m$. Given two sequences (of integers) $s=(a_1,\ldots,a_i)$ and $t=(b_1,\ldots,b_j)$, we denote the concatenation of $s$ and $t$ with the symbol $s\frown t=(a_1,\ldots,a_i,b_1,\ldots,b_j)$. We will always use the letter $\mathbb P$ to denote the set of all prime numbers. Now, given a prime number $p\in\mathbb P$, recall that (by the fundamental theorem of arithmetic) for every nonzero integer $a$ there exist unique integers $\alpha\geq0$ and $A$ such that $a=p^\alpha A$, with $(p,A)=1$. We will use the notation $\nu_p(a)=\alpha$ and say that $\alpha$ is the $p$-adic order of $a$; we will also denote $\xi_p(a)=A$. Hence, if $\mathbb P$ is the set of all prime numbers, then we have
\begin{equation*}
n=\prod_{p\in\mathbb P}p^{\nu_p(n)}
\end{equation*}
and
\begin{equation*}
\xi_p(n)=\prod_{{q\in\mathbb P}\atop{q\neq p}}q^{\nu_q(n)}
\end{equation*}
for all $n\in\mathbb N$. An important (elementary) fact that we will use is that, for integers $a,b$ such that $\nu_p(a)<\nu_p(b)$, we have $\nu_p(a+b)=\nu_p(a)$.

\noindent {\it The first proof:}
Suppose that $\mathbb P$ is a finite set, say $|\mathbb P|=N$, and let $c$ be the colouring given by
\begin{equation*}
c(n)=(\nu_2(n)\mod 2,\xi_2(n)\mod 4)\frown(\xi_p(n)\mod p\big|p\in\mathbb P\setminus\{2\}).
\end{equation*}
By definition, $\xi_2(n)\mod 4$ is either 1 or 3, and $\xi_p(n)\mod p$ is between $1$ and $p-1$ for each $p\in\mathbb P\setminus\{2\}$, so that $c$ is a colouring with $4\prod_{p\in\mathbb P\setminus\{2\}}(p-1)$ colours. By Folkman's theorem, there exists a set $X$ with $M=N+1$ nonzero elements such that $\fs(X)$ is $c$-monochromatic.

\begin{claim}
For any two distinct $a,b\in X$, we have $\nu_2(a)\neq\nu_2(b)$.
\end{claim}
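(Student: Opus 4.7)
The plan is to argue by contradiction, exploiting both the $\nu_2(n)\bmod 2$ and the $\xi_2(n)\bmod 4$ components of the colouring $c$. Concretely, I would assume that there exist distinct $a,b\in X$ with $\nu_2(a)=\nu_2(b)=\alpha$ and derive that $c(a)\neq c(a+b)$, contradicting the monochromaticity of $\fs(X)$.

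First I would write $a=2^\alpha A$ and $b=2^\alpha B$ with $A=\xi_2(a)$ and $B=\xi_2(b)$ both odd. Since $a$ and $b$ lie in $\fs(X)$ and receive the same colour under $c$, the second coordinate of the colouring forces $A\equiv B\pmod 4$. Because $A$ is odd, $A\bmod 4\in\{1,3\}$, and in either case $A+B\equiv 2A\equiv 2\pmod 4$. Hence $A+B$ is exactly twice an odd number, so $a+b=2^\alpha(A+B)$ satisfies $\nu_2(a+b)=\alpha+1$.

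Now I would invoke the first coordinate of $c$: since $a$ and $a+b$ both belong to $\fs(X)$, we must have $\nu_2(a)\equiv\nu_2(a+b)\pmod 2$, i.e.\ $\alpha\equiv\alpha+1\pmod 2$, which is absurd. This contradiction establishes that distinct elements of $X$ have distinct $2$-adic orders.

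The main subtlety here is that knowing only $\nu_2(a)=\nu_2(b)$ is not by itself enough to pin down $\nu_2(a+b)$, since cancellation in the leading $2$-adic digit can, a priori, push $\nu_2(a+b)$ up by an arbitrary amount; this is precisely why the colouring $c$ tracks $\xi_2(n)\bmod 4$ in addition to $\nu_2(n)\bmod 2$. The $\bmod\,4$ information pins the added valuation to exactly one, and then the parity component of $\nu_2$ delivers the contradiction.
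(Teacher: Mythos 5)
Your proof is correct and follows essentially the same route as the paper: use the $\xi_2\bmod 4$ coordinate to deduce $\xi_2(a)+\xi_2(b)\equiv 2\pmod 4$, hence $\nu_2(a+b)=\nu_2(a)+1$, and then contradict the parity coordinate $\nu_2\bmod 2$ of the colouring. No gaps.
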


\begin{proof}[Proof of claim:]
Suppose, on the contrary, that $a,b\in X$ are distinct and $\nu_2(a)=\nu_2(b)=\alpha$. By monochromaticity of $\fs(X)$, the numbers $\xi_2(a)\mod 4,\xi_2(b)\mod 4$ are either both equal to $1$, or both equal to $3$; in any case it must be the case that $\xi_2(a)+\xi_2(b)\equiv 2\mod 4$. Since $a+b=p^\alpha(\xi_p(a)+\xi_p(b))$, this means that $\nu_2(a)=\alpha$ and $\nu_2(a+b)=\alpha+1$; since $\fs(X)$ is monochromatic (and the colour contains, in its first entry, the information about the parity of $\nu_2$), this is a contradiction.
\end{proof}

\begin{claim}
For each odd $p\in\mathbb P$ and for any two distinct $a,b\in X$, we have $\nu_p(a)\neq\nu_p(b)$.
\end{claim}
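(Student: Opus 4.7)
The plan is to mirror the structure of the previous claim's argument, assuming for contradiction that there exist distinct $a,b\in X$ with $\nu_p(a)=\nu_p(b)=\alpha$ for some odd prime $p$. The first step is to use that $\fs(X)$ is $c$-monochromatic, together with the fact that the colouring records $\xi_p(n)\mod p$, to deduce that $\xi_p(a)\equiv\xi_p(b)\pmod{p}$; call this common residue $r$, which lies in $\{1,\ldots,p-1\}$ since $\xi_p(a)$ and $\xi_p(b)$ are coprime to $p$.

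Next, I would analyse the $p$-adic data of $a+b=p^{\alpha}(\xi_p(a)+\xi_p(b))$. This is the place where the odd-prime case departs from the $p=2$ case: since $p$ is odd, $\xi_p(a)+\xi_p(b)\equiv 2r\pmod{p}$ is nonzero modulo $p$ (because $r\neq 0$ and $\gcd(2,p)=1$). Consequently, $\nu_p(a+b)=\alpha$ and $\xi_p(a+b)\equiv 2r\pmod{p}$.

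To finish, I would apply monochromaticity once more: since $a+b\in\fs(X)$, we must have $\xi_p(a+b)\equiv\xi_p(a)\pmod{p}$, i.e.\ $2r\equiv r\pmod{p}$, so $r\equiv 0\pmod{p}$, contradicting $r\in\{1,\ldots,p-1\}$.

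The conceptual wrinkle, and the reason this claim requires a separate argument rather than fitting into a single uniform proof with the previous one, is that for $p=2$ the sum of two odd residues mod $4$ is always $2\pmod{4}$, so the $2$-adic valuation jumps by exactly one and the contradiction arises from the parity information in the first coordinate of $c$. For odd $p$, by contrast, the valuation does not jump; the contradiction comes instead from the observation that doubling a nonzero residue modulo an odd prime changes it, and this change is precisely what the $(\xi_p(n)\mod p)$-entry of the colour is designed to detect. No real obstacle arises beyond getting this book-keeping straight; only two elements of $X$ are needed, just as in the previous claim.
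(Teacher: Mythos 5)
Your proof is correct and follows essentially the same route as the paper: assume $\nu_p(a)=\nu_p(b)=\alpha$, use monochromaticity to get a common residue $C\equiv\xi_p(a)\equiv\xi_p(b)\pmod p$, note that since $p$ is odd and $(C,p)=1$ the sum $\xi_p(a)+\xi_p(b)\equiv 2C$ is prime to $p$, hence $\xi_p(a+b)\equiv 2C\not\equiv C\pmod p$, contradicting monochromaticity. Your added remark contrasting the $p=2$ case (valuation jump) with the odd case (residue change) matches the paper's intent exactly, so no changes are needed.
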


\begin{proof}[Proof of claim:]
Seeking for a contradiction, assume that $a,b\in X$ are distinct and $\nu_p(a)=\nu_p(b)=\alpha$. Since $\fs(X)$ is monochromatic, there is a $C$, $1\leq C\leq p-1$, such that $C\equiv\xi_p(a)\equiv\xi_p(b)\mod p$. Then $\xi_p(a)+\xi_p(b)\equiv 2C\mod p$. Since $(2,p)=1$ and $(C,p)=1$, we conclude $(2C,p)=1$; in particular, $(p,\xi_p(a)+\xi_p(b))=1$. However, since $a+b=p^\alpha(\xi_p(a)+\xi_p(b))$, the conclusion is that $\xi_p(a+b)=\xi_p(a)+\xi_p(b)\equiv 2C\not\equiv C\mod p$, contradicting the monochromaticity of the set $\fs(X)$.
\end{proof}

Therefore, distinct elements of $X$ always have distinct values for each of the $\nu_p$ functions. We choose at most $N$ elements of $X$ in the following way: let $a_1$ be the element of $X$ with least possible value for $\nu_2(a_1)$; then, choose $a_2\in X$ to be the one with least possible $\nu_3(a_2)$, unless this element is already $a_1$, in which case we do not choose $a_2$ yet. This process continues along the elements of $\mathbb P$: in general, once we are in the step corresponding to $p\in\mathbb P$, and assuming that we have already defined $a_1,\ldots,a_k$, and that for each $q\in\mathbb P$ with $q<p$ we have the element $a\in X$ with least possible $\nu_q(a)$ listed among the $a_i$, then we let $a_{k+1}$ be the element of $X$ with least possible $\nu_p$, unless it is already listed among the $a_i$ (in case it is already listed, we simply skip this step and choose $a_{k+1}$ in the step corresponding to the next element of $\mathbb P$). In the end, once we have obtained the full sequence $a_1,\ldots,a_t\in X$ (for some $t\leq N)$, since $|X|=N+1$ we may choose an $a\in X$ that is not listed among the $a_i$. Then, by construction, we have $\nu_p(a_1+\cdots+a_t)<\nu_p(a)$ for each $p\in\mathbb P$. Therefore $\nu_p(a_1+\cdots+a_t+a)=\nu_p(a_1+\cdots+a_t)$; since this happens for all prime numbers, we may conclude that $a_1+\cdots+a_t+a=a_1+\cdots+a_t$, hence $a=0$, a contradiction.
\hfill$\Box_{\text{First proof}}$

Our second proof uses slightly different ideas, being much more similar in spirit to e.g. Alpoge's proof that uses van der Waerden's theorem~\cite{alpoge}. Strictly speaking, the upcoming proof uses less colours, but it does require invoking a much bigger monochromatic set.

\noindent {\it The second proof:}
Suppose, once again, that the set $\mathbb P$ of prime numbers is finite, and let $|\mathbb P|=N$. We define a colouring of natural numbers $c$ by setting
\begin{equation*}
c(n)=(\nu_p(n)\mod 2\big|p\in\mathbb P).
\end{equation*}
Observe that this is a colouring with $2^N$ colours. Let
\begin{equation*}
M=(N+1)\prod_{p\in\mathbb P}p^4
\end{equation*}
and apply Folkman's theorem to obtain a set $X$ of $M$ many distinct (nonzero) numbers such that $\fs(X)$ is $c$-monochromatic.

\begin{claim}
For each $p\in\mathbb P$ and for each $\alpha$, there are no more than $p^4$ elements $a\in X$ such that $\nu_p(a)=\alpha$. 
\end{claim}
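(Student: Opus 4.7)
The plan is to argue by contradiction, producing $p$ distinct elements of $Y := \{a \in X : \nu_p(a) = \alpha\}$ whose sum has $p$-adic valuation exactly $\alpha + 1$, which would conflict with the $c$-monochromaticity of $\fs(X)$ in the $p$-th coordinate (the one recording $\nu_p \bmod 2$).

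First, I would apply the pigeonhole principle to the residues $\xi_p(a) \bmod p^2$ as $a$ ranges over $Y$. These residues are units modulo $p^2$, so they lie in a set of size $\varphi(p^2) = p(p-1)$. Since $p^4 > p(p-1)^2$ for every prime $p$, the hypothesis $|Y| > p^4$ forces, by pigeonhole, some residue class $s \bmod p^2$ to be realised by $\xi_p$ on at least $p$ distinct elements $a_1, \ldots, a_p \in Y$.

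Next, I would compute the $p$-adic valuation of $a_1 + \cdots + a_p$. Writing $a_1 + \cdots + a_p = p^\alpha T$ with $T = \sum_{i=1}^p \xi_p(a_i)$, the choice of the $a_i$ gives $T \equiv ps \pmod{p^2}$, so $T = p(s + pk)$ for some integer $k$. Since $s$ is coprime to $p$, so is $s + pk$, whence $\nu_p(T) = 1$ and $\nu_p(a_1 + \cdots + a_p) = \alpha + 1$. This has opposite parity from $\nu_p(a_1) = \alpha$, but both $a_1$ and $a_1 + \cdots + a_p$ belong to $\fs(X)$, contradicting monochromaticity.

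The main subtlety, and really the only decision to make, is calibrating the modulus and the number of summands: working modulo $p$ alone would leave $\nu_p(T)$ unconstrained beyond $\geq 1$, and summing fewer than $p$ elements would not even force $p \mid T$. Matching $\xi_p$ modulo $p^2$ across exactly $p$ summands is precisely what pins $\nu_p(T)$ down to $1$. The bound $p^4$ in the statement provides comfortable slack for the pigeonhole — the same argument in fact yields the stronger estimate $|Y| \leq p(p-1)^2$ — but $p^4$ is presumably the form needed for the downstream counting argument.
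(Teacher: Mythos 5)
Your proof is correct and follows essentially the same route as the paper: pigeonhole on the residues $\xi_p(a)\bmod p^2$ among the elements with $\nu_p(a)=\alpha$, then sum several same-residue elements to produce a member of $\fs(X)$ with $p$-adic order exactly $\alpha+1$, contradicting the parity coordinate of $c$. The only difference is in the choice of how many summands to take: the paper picks $t<p^2$ with $tA\equiv p \pmod{p^2}$ and sums $t$ elements of the common class (so it needs up to $p^2-1$ of them), whereas you sum exactly $p$ of them so that the sum of the $\xi_p$'s is automatically $\equiv ps\pmod{p^2}$, which needs only $p$ elements per class and incidentally gives the sharper bound $p(p-1)^2$, improving even on the paper's remark about $(p^2-2)^2$.
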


\begin{proof}[Proof of claim]
Suppose, seeking a contradiction, that there are more than $p^4$ such elements. Since $\xi_p(a)\mod p^2$ is one of $p^2$ possibilities, for all $a\in X$, by the pigeonhole principle one can find $p^2$ distinct elements $a_1,a_2,\ldots,a_{p^2}\in X$, along with some $0<A<p^2$, such that $\xi_p(a_i)\equiv A\mod p^2$ and $\nu_p(a_i)=\alpha$ for all $i\leq p^2$. Since $(A,p)=1$, there exists some $t$, $0<t<p^2$, such that $tA\equiv p\mod p^2$. Letting $b=a_1+\cdots+a_t$ and $B=\xi_p(a_1)+\cdots+\xi_p(a_t)$, we get
\begin{equation*}
b=a_1+\cdots+a_t=p^\alpha(\xi_p(a_1)+\cdots+\xi_p(a_t))=p^\alpha B,
\end{equation*}
where
\begin{equation*}
B=\xi_p(a_1)+\cdots\xi_p(a_t)\equiv A+\cdots+A=tA\equiv p\mod p^2,
\end{equation*}
meaning $\nu_p(B)=1$. Therefore $\nu_p(b)=\alpha+1$ while $\nu_p(a_1)=\alpha$, contradicting the monochromaticity of $\fs(X)$ (since $c$ includes the information about the parity of $\nu_p$).
\end{proof}
The attentive reader will note that the $p^4$ in the above claim is overkill; by being slightly more careful in the above proof, one can actually ensure that no more than $(p^2-p)(p^2-2)$ elements of $X$ have the same $\nu_p$ value.

Therefore, since $X$ has $M=(N+1)\prod_{p\in\mathbb P}p^4$ elements, one can successively thin out the set $X$, by going through each $p\in\mathbb P$ and removing, for each $\alpha$, all but one of the elements $a\in X$ with $\nu_p(a)=\alpha$. The previous claim ensures that, by doing this, we are keeping at least $\frac{1}{p^4}$ of the elements of our set. Therefore, at the end of the process, we are left with a subset $Z\subseteq X$, with $|Z|\geq\left(\prod_{p\in\mathbb P}\frac{1}{p^4}\right)|X|=N+1$, such that for each $p$ and any two distinct $a,b\in Z$, we must have $\nu_p(a)\neq\nu_p(b)$. We now work on the elements of $Z$, in exactly the same way as in our first proof, in order to obtain a list $a_1,\ldots,a_t$ (for some $t\leq N)$ of elements of $Z$ such that, for all $p\in\mathbb P$, the element $a\in Z$ with least possible value for $\nu_p(a)$ is already listed among the $a_i$. Just as in our first proof, since $|Z|>N$, we may choose an $a\in X$ not listed among the $a_i$, so that $\nu_p(a_1+\cdots+a_t+a)=\nu_p(a_1+\cdots+a_t)$ for all $p\in\mathbb P$ and, since the $p\in\mathbb P$ are all the prime numbers, we may conclude that $a_1+\cdots+a_t+a=a_1+\cdots+a_t$ so that $a=0$, a contradiction.

\hfill$\Box_{\text{Second proof}}$

\section*{Acknowledgements}

The author's interest in the applications of Ramsey theory to number theory was first sparked at the 2024 RaTLoCC (Ramsey Theory in Logic, Combinatorics and Complexity) conference in Pisa (especially after discussions with Haydar G\"oral), and later on it was revived at the 2025 workshop on Infinitary Proof Theory at CMO-BIRS; the author is therefore grateful to the organizers of these events, as well as to the two anonymous referees for a thorough reading and useful suggestions on the paper. Finally, the author was partially supported by Secihti's grant CBF2023-2024-334, as well as by IPN's internal grants SIP-20253559 and SIP-20260817.

\end{document}